\newtheorem{theorem}{\bf Theorem}[section]
\newtheorem{corollary}[theorem]{\bf Corollary}
\newtheorem{lemma}[theorem]{\bf Lemma}
\newtheorem{proposition}[theorem]{\bf Proposition}
\newcommand{\qed}{\hfill $\square$ \bigskip}
\newcommand{\Fib}{{\cal F}}
\newcommand{\R}{\mathcal{R}}
\newcommand{\M}{\mathcal{M}}
\newcommand{\B}{{\mathcal B}}
\newcommand{\V}{{\mathcal V}}
\newcommand{\A}{{\mathcal A}}
\newcommand{\st}{ ~|~ }
\numberwithin{equation}{section}
\begin{document}
\modulolinenumbers[5]
\title{Distance cube polynomials of Fibonacci and Lucas-run graphs}

\author{
Michel Mollard\footnote{Institut Fourier, CNRS, UMR 5582, Universit\'e Grenoble Alpes, CS 40700, 38058 Grenoble Cedex 9, France}
}
\date{\today}
\maketitle

\begin{abstract}
The Fibonacci-run graphs $\R_n$ are a familly of  an induced subgraph of hypercubes
introduced by E\u{g}ecio\u{g}lu and Ir\v{s}i\v{c} in 2021.
A cyclic version of $\R_n$, the  Lucas-run graph  $\R_n^l$,  was also recently proposed (Jianxin Wei, 2024).
We prove that the generating function previously given for the polynomial $D_{\R_n}(x,q)$ which counts the number of hypercubes at a given distance in $\R_n$ was erroneous and determine its correct expression. We also consider Lucas-run graphs and prove the conjecture proposed by Jianxin Wei establishing the link between cube polynomials of $\R_n^l$ and $\R_n$.

\end{abstract}

%

\noindent
{\bf Keywords:} Fibonacci cubes, Lucas cubes, Fibonacci-run graphs, Lucas-run graphs, Hypercube

\noindent
{\bf AMS Subj. Class. }:  05C12, 05C30, 05C31, 05C60, 05C90

\section{Introduction}
\label{sec:Introduction}

The {\em Fibonacci cube} of dimension $n$, denoted as $\Gamma_n$, is the subgraph of the hypercube $Q_n$ induced by vertices with no consecutive 1s. This graph was introduced in \cite{H-1993a} as an interconnection network.

$\Gamma_n$ is an isometric subgraph and is inspired by the Fibonacci numbers. It has attractive recurrent structures such as
its decomposition into two subgraphs which are also Fibonacci cubes themselves.  Structural properties of these graphs were more extensively 
studied afterwards; see for example the survey~\cite{K-2013a} and the recent book~\cite{EKM-2023}. \\
\indent Lucas cubes $\Lambda_n$, introduced in \cite{MPZ-2001}, have attracted attention as well due to the fact that these cubes are 
the cyclic version of Fibonacci cubes. They have also been widely studied.

Not only the investigation of the 
properties of Fibonacci cubes and Lucas cubes attracted many researchers, but it
has also led to the development of a variety of interesting 
generalizations and variations covered by a whole chapter in the book
\cite{EKM-2023}. Among these families of graphs are 
generalized Fibonacci cubes, Pell graphs, 
$k$-Fibonacci cubes, daisy cubes, Fibonacci $p$-cubes and Fibonacci-run graphs.

Fibonacci-run graphs $\R_n$ are subgraphs of hypercubes induced by binary strings with restricted run lengths. The basic properties of Fibonacci-run graphs are studied in~\cite{EI-2021a} and its companion paper~\cite{EI-2021b}. 

Among these properties the counting polynomial $D_{\R_n}(x,q)$ of the number of induced subgraphs of $\R_n$ isomorphic to $Q_k$ at a given distance  of $0^n$ is studied in~\cite{EI-2021a}. This polynomial is a generalization of  $C_{\R_n}(x)=\sum_{k\geq0}c_{k}x^k$ where $c_{k}$ is the number of subgraphs of $\R_n$ isomorphic to $Q_k$.  We prove in Section~\ref{sec:FR} that the generating function previously given for $D_{\R_n}(x,q)$ is erroneous and determine its correct expression.

A cyclic version of $\R_n$, the Lucas-run graph $\R_n^l$ has been introduced and studied in~\cite{W-2024a}. Inspired by relations $|V(\R_n^l)|=2|V(\R_{n-1})|-|V(\R_{n-2})|$ and $|E(\R_n^l)|=2|E(\R_{n-1})|-|E(\R_{n-2})|$ it is conjectured in~\cite{W-2024a} that $C_{R_n^l}(x)=2C_{R_{n-1}}(x)-C_{R_{n-2}}(x).$ In Section \ref{sec:LR} we prove the stronger result $D_{R_n^l}(x,q)=2D_{R_{n-1}}(x,q)-D_{R_{n-2}}(x,q).$

The last section is a conclusion consisting of the presentation of two open problems and a quick discussion on the application of the method used in this article to Fibonacci cubes and Lucas cubes.

In the next section concepts and results 
needed are given.
\section{Preliminaries}
\bigskip

We denote by $[n]$ the set of integers $i$ such that $1\leq i \leq n$.

Throughout this article the use of the union symbol $A\cup B$ or $\bigcup_{k\geq 0}{A_k}$ assumes that the sets are disjoint.

Let $\{F_n\}$ be the \emph{Fibonacci numbers}:
$F_0 = 0$, $F_1=1$, $F_{n} = F_{n-1} + F_{n-2}$ for $n\geq 2$.

Let $B=\{0,1\}$ and $\B_n$ be the set of strings of length $n$ over $B$
$$
\B_n= \{u_1u_2\ldots u_n \st u_i \in B \}\,.
$$
the {\em Hamming weight} of $u\in \B_n$ is $w(u) = \sum_{i=1}^n u_i$, that is, the number of bits $1$ in $u$. We note $l(u)$ the length of a binary string $u$.  We will also use power notation for the concatenation of similar bits, for instance $0^n = 0\ldots 0\in \B_n$.

Given a set $F$ of binary strings we will denote by $0F$, $F0$, $F00$ and $0F0$ the sets of strings $0F=\{0s\st s\in F\}$, $F0=\{s0\st s\in F\}$, $F00=\{s00\st s\in F\}$ and $0F0=\{0s0\st s\in F\}$.

 The {\em $n$-dimensional hypercube} $Q_n$ is the graph with strings of $\B_n$ as vertices, vertices $u_1\ldots u_n$ and $v_1\ldots v_n$ being adjacent if $u_i\ne v_i$ for exactly one $i\in [n]$. 

The \emph{distance} $d_G(u,v)$ between two vertices $u$ and $v$ of a graph $G$  is the 
number of edges on a shortest shortest $u,v$-path. It is immediate that the distance between two vertices of $Q_n$ is  the number of bits the strings differ, sometime called Hamming distance $H(u,v)$. Note that $d_{Q_n}(0^n,v)=w(v)$ for any vertex $v$ of $Q_n$. 

Let $G$ be an induced subgraph of $Q_n$. Since each neighbor of $v \in V(G)$ is obtained either by changing a $0$ in $v$ into a $1$ 
or by changing a $1$ in $v$ into a $0$, we can distinguish between the 
\emph{up-degree} $\deg_{\rm up} (v)$, and \emph{down-degree} $\deg_{\rm down} (v)$ of the vertex $v$. 
The first is the number of up-neighbors of $v$ 
while the second is the number of down-neighbors of $v$.

The following result is well-known.

\begin{proposition}\label{pro:bt}
In every induced subgraph $H$ of $Q_n$ isomorphic to $Q_k$ there exists a unique vertex of minimal Hamming weight, \emph{the bottom vertex} $b(H)$. There exists also a unique vertex of maximal Hamming weight, the \emph{top vertex} $t(H)$. 
Furthermore $b = b(H)$ and $t = t(H)$ are at distance $k$ and characterize $H$ among the subgraphs of $Q_n$ isomorphic to $Q_k$. 
\end{proposition}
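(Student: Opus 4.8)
The plan is to establish the three claims in turn, exploiting the fact that an induced copy $H$ of $Q_k$ inside $Q_n$ is determined by a set of $k$ "active" coordinates together with the fixed values on the remaining $n-k$ coordinates. First I would set up this description: fix any vertex $v\in V(H)$; since $H$ is isomorphic to $Q_k$, $v$ has exactly $k$ neighbours in $H$, and each such neighbour differs from $v$ in a single bit position. Let $S\subseteq[n]$ be the set of those $k$ positions. The key step is to show that $V(H)$ is exactly the set of strings obtained from $v$ by flipping an arbitrary subset of the coordinates in $S$ — i.e. $H$ is a "subcube" with active set $S$. This follows because both $H$ and the candidate subcube are induced subgraphs of $Q_n$ isomorphic to $Q_k$ sharing the vertex $v$ and all $k$ of its neighbours, and one can propagate outward: any vertex of $H$ at distance $j$ from $v$ has all $k$ of its $H$-neighbours obtained by single bit-flips, and an inductive argument on $j$ (using that $H$, being isomorphic to $Q_k$, is connected, bipartite by weight parity, and has the right number of vertices at each distance) forces every flipped coordinate to lie in $S$. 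I expect this structural identification to be the main obstacle; once it is in place the rest is bookkeeping.

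Granting that $H$ has active set $S$ with $|S|=k$ and fixed bits on $[n]\setminus S$, define $b$ to be the string agreeing with $v$ off $S$ and having $0$ in every position of $S$, and $t$ the string agreeing with $v$ off $S$ and having $1$ in every position of $S$. Both lie in $V(H)$. For any $u\in V(H)$, $u$ agrees with $b$ and $t$ off $S$, so $w(u)=w_0+(\text{number of }1\text{'s of }u\text{ in }S)$ where $w_0$ is the common weight contributed by $[n]\setminus S$; hence $w(u)$ ranges over $[w_0,w_0+k]$, is minimised uniquely at $b$ (all zeros on $S$) and maximised uniquely at $t$ (all ones on $S$). This gives existence and uniqueness of the bottom and top vertices simultaneously.

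Finally, $d_{Q_n}(b,t)=H(b,t)=k$ since $b$ and $t$ differ exactly on the $k$ coordinates of $S$. For the characterisation: given $b$ and $t$ with $H(b,t)=k$, the set $S$ of coordinates on which they differ has size $k$, and the fixed bits off $S$ are read off from $b$ (equivalently $t$); this recovers the active set and the fixed values, hence recovers $H$ as the subcube they span. So two subgraphs of $Q_n$ isomorphic to $Q_k$ with the same bottom and top vertices coincide, completing the proof. \qed
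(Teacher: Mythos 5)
The paper gives no proof of this proposition at all --- it is introduced with ``The following result is well-known'' and left unproved --- so there is no argument of the author's to compare yours against. Your plan is the standard proof of this standard fact, and it is essentially sound: the reduction to showing that $H$ is a subcube (a set $S$ of $k$ active coordinates plus fixed bits elsewhere) is the right skeleton, and once that is granted the statements about $b(H)$, $t(H)$, their distance, and the characterisation are indeed pure bookkeeping, exactly as you carry them out. The one place where your sketch should be made precise is the ``propagate outward'' induction: the clean mechanism is that a vertex $u$ of $H$ at $H$-distance $j+1\geq 2$ from the base vertex $v$ has at least two $H$-neighbours at distance $j$, which by the inductive hypothesis lie in the subcube at Hamming distance $j$ from $v$ and differ from each other in exactly two coordinates of $S$; their only two common neighbours in $Q_n$ both lie in the subcube, and the ``downward'' one is excluded because (by the counting you invoke) it is already a vertex of $H$ at $H$-distance $j-1$. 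Spelling that out turns your appeal to connectivity, parity and level sizes into an actual argument; note also that this argument never uses that $H$ is induced, only that it is a subgraph isomorphic to $Q_k$.
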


Let $G$ be an induced subgraph of $Q_n$. If $H$ is an induced subgraph of $G$, it is also an induced subgraph of $Q_n$. Thus Proposition~\ref{pro:bt} is still true for induced subgraphs of $G$. We call $w(b(H))$ the \emph{distance of $H$}.

For a graph $G$, let $c_k$ $(k\ge 0)$ be the number of induced 
subgraphs of $G$ isomorphic to  $Q_k$. The {\em cube polynomial}, $C_G(x)$,
of $G$, is the corresponding enumerator polynomial, that is

\begin{equation}\label{eqn:defCG}
C_G(x) = \sum_{k\geq 0} c_k x^k\,.
\end{equation}
This polynomial was introduced in~\cite{BKS-2003}.

Assume $0^n$ belongs to $G$ subgraph of $Q_n$. A generalization of $C_{G}(x)$ is the {\em distance cube polynomial} with respect to $0^n$~\cite{KM-2019a} first introduced as {\em$q$-cube polynomial}{~\cite{SE-2017a,SE-2018a} in the case of graphs where it can be seen as a $q$-analog of the cube polynomial. We define this polynomial the following way

\begin{equation}\label{eqn:defCGq}
D_G(x,q) = \sum_{k\geq 0} c_{k,d} x^kq^d 
\end{equation} where $c_{k,d}$ is  the number of induced 
subgraphs of $G$ isomorphic to $Q_k$ with a bottom vertex at distance $d$ of $0^n$.

Let $\A$ be a set of strings generated freely (as a monoid) by a finite or  infinite alphabet
$E$. 
This means that every strings $s \in \A$ can be written 
uniquely as a concatenation of zero or more strings from $E$. Note that the empty string $\lambda$ belongs to $\A$.
Associate to any string $s \in \A$ a polynomial, possibly multivariate,  $\theta_s$. We will say that $s \rightarrow \theta_s$ is \emph{concatenation multiplicative} over $\A$ if $\theta_{\lambda}=1$ and $\theta_{e_1e_2\ldots e_k}=\theta_{e_1}\theta_{e_2}\ldots\theta_{e_k}$ for any choice of elements $e_i$ in $E$.

 For example the monoid of finite binary strings $\bigcup_{n\geq 0}{\B_n}$ is generated by $B=\{0,1\}$. On this monoid  $\theta_s(x)=x^{w(s)}$ is concatenation multiplicative. On the other hand if $m(s)$ denote the maximum number of consecutive $1$s in $s$ then $\theta_s(x)=x^{m(s)}$ is not concatenation multiplicative since $\theta_{1101}(x)=x^2\neq \theta_1(x)\theta_1(x)\theta_0(x)\theta_1(x)=x^3$.

Note that if $s \rightarrow \theta_s$ and $s \rightarrow \phi_s$ are concatenation multiplicative then $s \rightarrow \theta_s\phi_s$ is concatenation multiplicative. 

\begin{proposition}\label{pro:mono}

Let $\A$ be a set of strings generated freely, as a monoid, by the alphabet $E$ and $s \rightarrow \theta_s$ concatenation multiplicative over $\A$ then
\begin{equation*}
\sum_{s\in \A}{\theta_s}=\frac{1}{1-\sum_{e\in E}{\theta_e}}\,.
\end{equation*}
\end{proposition}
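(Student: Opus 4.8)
The plan is to exploit the unique factorization afforded by free generation in order to reduce the sum to a geometric series. First I would stratify $\A$ according to the number of factors: for $k\geq 0$ let $\A_k=\{e_1e_2\cdots e_k \st e_i\in E\}$ be the set of elements of $\A$ that are concatenations of exactly $k$ letters of $E$, so that $\A_0=\{\lambda\}$. Since $\A$ is generated freely by $E$, every $s\in\A$ lies in exactly one $\A_k$, and moreover the map $(e_1,\ldots,e_k)\mapsto e_1\cdots e_k$ is a bijection from $E^k$ onto $\A_k$; this is precisely the content of the hypothesis that each $s\in\A$ can be written uniquely as such a concatenation. Hence $\A=\bigcup_{k\geq 0}\A_k$ (a disjoint union).

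Next I would feed in concatenation multiplicativity. For $s=e_1\cdots e_k\in\A_k$ we have $\theta_s=\theta_{e_1}\cdots\theta_{e_k}$, while $\theta_\lambda=1$ handles $k=0$. Summing over $\A_k$ through the bijection with $E^k$ and expanding the $k$-fold product gives
\begin{equation*}
\sum_{s\in\A_k}\theta_s \;=\; \sum_{(e_1,\ldots,e_k)\in E^k}\theta_{e_1}\cdots\theta_{e_k} \;=\; \Bigl(\sum_{e\in E}\theta_e\Bigr)^{k}\,.
\end{equation*}
Summing over $k$ and recognising the geometric series then yields
\begin{equation*}
\sum_{s\in\A}\theta_s \;=\; \sum_{k\geq 0}\sum_{s\in\A_k}\theta_s \;=\; \sum_{k\geq 0}\Bigl(\sum_{e\in E}\theta_e\Bigr)^{k} \;=\; \frac{1}{1-\sum_{e\in E}\theta_e}\,,
\end{equation*}
which is the asserted identity.

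The one point that genuinely requires care — and the only real obstacle — is the legitimacy of these manipulations when $E$ is infinite, so that $\sum_{e\in E}\theta_e$ is itself an infinite sum and the geometric series has infinitely many terms. I would handle this by working in the ring of formal power series in the variables occurring in the $\theta_e$: in every application in this paper each $\theta_e$ is a monomial of positive total degree, so $\sum_{e\in E}\theta_e$ has zero constant term, only finitely many letters contribute to any fixed degree, and the family $\{(\sum_{e\in E}\theta_e)^k\}_{k\geq 0}$ is summable in the degreewise topology. With this convention the double sum above is a summable family, its rearrangement is justified, and the computation goes through exactly as written; the identity $\sum_{k\geq 0}t^k=(1-t)^{-1}$ is then just the defining property of the inverse of $1-t$ for $t$ with zero constant term.
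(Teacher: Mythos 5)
Your proof is correct and follows essentially the same route as the paper's: stratify $\A$ by the number of factors, identify $\sum_{s\in\A_k}\theta_s$ with $\bigl(\sum_{e\in E}\theta_e\bigr)^k$ via unique factorization, and sum the geometric series. Your additional paragraph justifying the formal-power-series manipulations for an infinite alphabet is a welcome precision that the paper leaves implicit, but it does not change the argument.
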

\begin{proof}
For any integer $k$ let $\A_k$ be the set of strings that are concatenation of $k$ strings of $E$ then since $\A= \bigcup_{k\geq 0}{\A_k}$
\begin{equation*}
\sum_{s\in \A}{\theta_s}=\sum_{k\geq0}{\sum_{s\in \A_k}{\theta_s}}\,.
\end{equation*}
On the other hand
\begin{equation*}
\frac{1}{1-\sum_{e\in E}{\theta_e}}=\sum_{k\geq0}{(\sum_{e\in E}{\theta_e})^k}
\end{equation*}
and for any integer $k$
\begin{equation*}
(\sum_{e\in E}{\theta_e})^k=\sum_{(e_1,e_2,\ldots, e_k)\in E^k}{\theta_{e_1}\theta_{e_2}\ldots\theta_{e_k}}
=\sum_{(e_1,e_2,\ldots e_k)\in E^k}{\theta_{e_1e_2\ldots e_k}}=\sum_{s\in \A_k}{\theta_s}\,.
\end{equation*}
\end{proof}\qed

A \emph {Fibonacci string} is a binary strings without consecutive $1$s.

The \emph {Fibonacci cube} $\Gamma_n$  is the subgraph of $Q_n$ induced by Fibonacci strings of length $n$, thus for $n\geq2$:
\begin{eqnarray*}
V(\Gamma_n) &  = &
\{u_1u_2 \ldots u_n \st \forall i \in [n-1]\,u_i.u_{i+1}=0\} \, , \\
E(\Gamma_n) & = & 
\{ \{ u, v \} \st H (u,v) = 1 \} \, .
\end{eqnarray*}


According to the definition  $V(\Gamma_0)=\{\lambda\}$ and $V(\Gamma_1)=\{0,1\}$. Note that $|V(\Gamma_n)|=F_{n+2}$.

The \emph {Lucas cube} $\Lambda_n$ is the subgraph of $Q_n$ induced by strings without consecutive $1$s circularly:
\begin{eqnarray*}
V(\Lambda_n) &  = &
\{u_1u_2 \ldots u_n \st \forall i \in [n-1]\,u_i.u_{i+1}=0 \,and\,u_n.u_1=0\} \, . \\
\end{eqnarray*}

We set  $V(\Lambda_0)=\{\lambda\}$ and $V(\Lambda_1)=\{0\}$.

As recalled in~\cite{EI-2021a}
   extended Fibonacci strings, obtained by adding 00 to the Fibonacci strings, together with 
the null string $\lambda$ and the singleton $0$ are generated freely as a monoid by the infinite alphabet 

\begin{equation}\label{eqn:F}
F = \{ 0,100, 10100,1010100, \ldots  \}
\end{equation} 

We note $\Fib=\{\lambda,0\} \cup(\bigcup_{n\geq 0}{V(\Gamma_n)00})$ this monoid.

A run in a binary string is a maximal length substring of  $0$s or $1$s.

We consider now \emph{run-constrained binary strings}
which are strings of $0$s and $1$s
in which every run of $1$s is immediately followed by a strictly longer run of $0$s.
Such run-constrained strings, including null-word $\lambda$ and the singleton $0$, 
are the elements of the monoid $\M$ generated freely by the letters from the alphabet
\begin{equation}\label{eqn:R}
R = \{ 0,100, 11000,1110000, \ldots  \}
\end{equation}
Note that run-constrained strings of length $ n \geq 2$ must end with $00$. Let $\V_n$ be the set of run-constrained strings of length $n$. 

The \emph{Fibonacci-run graph} $\R_n$ is the induced subgraph of $Q_n$ defined as follows:
\begin{eqnarray*}
V(\R_n) &  = &
\{ s \st s00 \in \V_{n+2} \} \, , \\
E(\R_n) & = & 
\{ \{ u, v \} \st H (u,v) = 1 \} \, .
\end{eqnarray*}

We have thus $\M=\bigcup_{k\geq 0}{\V_k}=\{\lambda,0\}\cup(\bigcup_{n\geq 0}{V(\R_n)}00)$.

There is a bijection between extended Fibonacci strings and run-constrained strings. Indeed define  $\Phi:F \rightarrow R$ by setting $\Phi((10)^i 0) = 1^i 0^{i+1} $ for $ i \geq 0 $, and then
extend $\Phi$ to full words via the unique factorization.

A consequence of this bijection is that $|V(\R_n)|=|V(\Gamma_n)|=F_{n+2}$.

Following \cite{W-2024a} a \emph{circular-run-constrained string} is defined as a string where every run of $1$s is immediately followed by a strictly longer run of 0s circularly. Let $\V_n^c$ be the set of circular-run-constrained strings of length $n$.

For $n\geq 0$, the \emph{Lucas-run graph} $\R_n^l$ is subgraph of $Q_n$ induced by the set of vertices

\begin{equation*}
V(\R_n^l)  =
\{ s \st s0 \in \V_{n+1}^c \text{ and } s00 \in \V_{n+2} \} \, . 
\end{equation*}

 Note that $\R_n^l$ is an induced subgraph of $\R_n$.

The definition of Lucas-run graph is motivated by the fact that $\bigcup_{n\geq 0}\{V(\R_n^l)00\}$ is the image of $\bigcup_{n\geq 0}\{V(\Lambda_n)00\}$ by the bijection $\Phi$ and thus  $|V(\R_n^l)|=|V(\Lambda_n)|$.

Another cyclic variation of Fibonacci-run graphs has been proposed in~\cite{WY-2024}

\section{Some complements about Fibonacci-run graphs}\label{sec:FR}
\bigskip

There is obviously something wrong in the value of the generating function of the $q$-cube polynomials $D_{\R_n}(x,q)$ given in~\cite[Proposition~8.2]{EI-2021a}. Indeed from this generating function the polynomial for $n=5$ should be
\begin{equation*} 1+5q+7q^2+(5+14q)x+7q^2
\end{equation*}
and by direct inspection of Fig.~\ref{fig:R_5} this polynomial is
 \begin{equation*}
D_{\R_5}= 1+5q+6q^2+q^3+(5+12q+2q^2)x+(6+q)x^2.
\end{equation*}

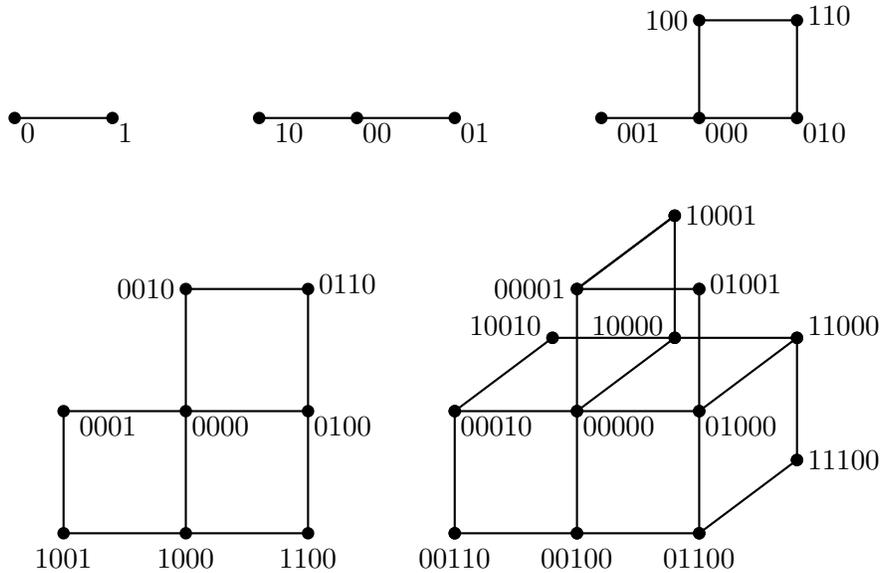
\begin{figure}[!ht]
\begin{center}
\begin{tikzpicture}[scale=0.65,style=thick]
\tikzstyle{every node}=[draw=none,fill=none]
\def\vr{3pt} 
\begin{scope}[xshift =0, yshift = 6cm]

\path (7,2.5) coordinate (v001);
\path (5,2.5) coordinate (v000);
\path (3,2.5) coordinate (v010);
\path (7,4.5) coordinate (v101);
\path (5,4.5) coordinate (v100);

\path (0,2.5) coordinate (v01);
\path (-2,2.5) coordinate (v00);
\path (-4,2.5) coordinate (v10);

\path (-7,2.5) coordinate (v1);
\path (-9,2.5) coordinate (v0);

\draw (v0) -- (v1);
\draw (v10) -- (v00);\draw (v00) -- (v01);

\draw (v010) -- (v000) -- (v001);
\draw (v000) -- (v100) -- (v101) -- (v001);
%
\draw (v1)   [fill=black]circle (\vr);
\draw (v0)  [fill=black] circle (\vr);

\draw (v01)  [fill=black] circle (\vr);
\draw (v00)  [fill=black] circle (\vr);
\draw (v10)  [fill=black] circle (\vr);

\draw (v001)  [fill=black] circle (\vr);
\draw (v000)  [fill=black] circle (\vr);
\draw (v010)  [fill=black] circle (\vr);
\draw (v101)  [fill=black] circle (\vr);
\draw (v100)  [fill=black] circle (\vr);
\draw[right] (v0)++(-0.1,-0.3) node {$0$};
\draw[right] (v1)++(-0.1,-0.3) node {$1$};
\draw[right] (v10)++(0.1,-0.3) node {$10$};
\draw[right] (v00)++(-0.1,-0.3) node {$00$};
\draw[right] (v01)++(-0.1,-0.3) node {$01$};
\draw[left] (v100)++(-0.0,0) node {$100$};
\draw[right] (v101)++(0.0,0.1) node {$110$};
\draw[right] (v010)++(0.1,-0.3) node {$001$};
\draw[right] (v000)++(-0.1,-0.3) node {$000$};
\draw[right] (v001)++(-0.1,-0.3) node {$010$};
\end{scope}
\begin{scope}[yshift = 0cm, xshift = 0cm]
\path (5,0) coordinate (v001001);
\path (2.5,0) coordinate (v001000);
\path (0,0) coordinate (v001010);
\path (5,2.5) coordinate (v000001);
\path (2.5,2.5) coordinate (v000000);
\path (0,2.5) coordinate (v000010);
\path (5,5) coordinate (v000101);
\path (2.5,5) coordinate (v000100);
\path (7.0,4) coordinate (v010001);
\path (4.5,4) coordinate (v010000);
\path (2.0,4) coordinate (v010010);
\path (7.0,1.5) coordinate (v010101);
\path (4.5,6.5) coordinate (v010100);
\draw (v001001) -- (v001000) -- (v001010) -- (v000010) -- (v000000) -- (v000001) -- (v001001);
\draw (v001000) -- (v000000) -- (v000100) -- (v000101) -- (v000101) -- (v000001);
\draw (v010100) -- (v000100);

\draw (v000001) -- (v010001) -- (v010000);
\draw (v010000) -- (v010100);

\draw (v010100) -- (v000100);
\draw (v010000) -- (v010010);
\draw (v010010) -- (v000010);
\draw (v001001) -- (v010101) -- (v010001);
\draw (v000000) -- (v010000);
\draw (v001001)  [fill=white] circle (\vr);
\draw (v001000)  [fill=white] circle (\vr);
\draw (v001010)  [fill=white] circle (\vr);
\draw (v000001)  [fill=white] circle (\vr);
\draw (v000000)  [fill=white] circle (\vr);
\draw (v000010)  [fill=white] circle (\vr);
\draw (v010001)  [fill=white] circle (\vr);
\draw (v010000)  [fill=white] circle (\vr);
\draw (v010010)  [fill=white] circle (\vr);
\draw (v000101)  [fill=white] circle (\vr);
\draw (v000100)  [fill=white] circle (\vr);
\draw (v010101)  [fill=white] circle (\vr);
\draw (v010100)  [fill=white] circle (\vr);
\end{scope}

\begin{scope}[xshift = -8cm, yshift = 0 cm]
\path (5,0) coordinate (v101001);
\path (2.5,0) coordinate (v101000);
\path (0,0) coordinate (v101010);
\path (5,2.5) coordinate (v100001);
\path (2.5,2.5) coordinate (v100000);
\path (0,2.5) coordinate (v100010);
\path (5,5) coordinate (v100101);
\path (2.5,5) coordinate (v100100);
\draw(v101001) -- (v101000);
\draw(v101000) -- (v101010);
\draw(v101010) -- (v100010);

\draw (v100010) -- (v100000) -- (v100001);
\draw (v100001) -- (v101001);

\draw (v101000) -- (v100000);
\draw (v100000) -- (v100100) -- (v100101) -- (v100001);

\draw (v101001)  [fill=black] circle (\vr);
\draw (v101000)  [fill=black] circle (\vr);
\draw (v101010)  [fill=black] circle (\vr);
\draw (v100001)  [fill=black] circle (\vr);
\draw (v100000)  [fill=black] circle (\vr);
\draw (v100010)  [fill=black] circle (\vr);
\draw (v100101)  [fill=black] circle (\vr);
\draw (v100100)  [fill=black] circle (\vr);

\draw[left] (v100100)++(-0.0,0) node {$0010$};
\draw[right] (v100101)++(0.0,0.1) node {$0110$};
\draw[right] (v100010)++(0.1,-0.3) node {$0001$};
\draw[right] (v100000)++(-0.1,-0.3) node {$0000$};
\draw[right] (v100001)++(-0.1,-0.3) node {$0100$};
\draw[below] (v101000)++(0.0,-0.1) node {$1000$};
\draw[below] (v101010)++(0.0,-0.1) node {$1001$};
\draw[below] (v101001)++(0.0,-0.1) node {$1100$};

\draw[right] (v010100)++(0.0,0.0) node {$10001$};
\draw[right] (v000101)++(0.0,0.1) node {$01001$};
\draw[left] (v000100)++(0.0,-0.0) node {$00001$};
\draw[left] (v010000)++(0.0,0.25) node {$10000$};
\draw[left] (v010010)++(0.0,0.25) node {$10010$};
\draw[right] (v010001)++(0,0.25) node {$11000$};
\draw[right] (v000001)++(-0.1,-0.3) node {$01000$};
\draw[right] (v000010)++(-0.1,-0.3) node {$00010$};
\draw[right] (v000000)++(-0.1,-0.3) node {$00000$};
\draw[right] (v010101)++(0.0,0.0) node {$11100$};
\draw[below] (v001000)++(0,-0.1) node {$00100$};
\draw[below] (v001001)++(0,-0.1) node {$01100$};
\draw[below] (v001010)++(0.0,-0.1) node {$00110$};

\end{scope}
\draw (v001001)  [fill=black] circle (\vr);
\draw (v001000)  [fill=black] circle (\vr);
\draw (v001010)  [fill=black] circle (\vr);
\draw (v000001)  [fill=black] circle (\vr);
\draw (v000000)  [fill=black] circle (\vr);
\draw (v000010)  [fill=black] circle (\vr);
\draw (v010001)  [fill=black] circle (\vr);
\draw (v010000)  [fill=black] circle (\vr);
\draw (v010010)  [fill=black] circle (\vr);
\draw (v000101)  [fill=black] circle (\vr);
\draw (v000100)  [fill=black] circle (\vr);
\draw (v010101)  [fill=black] circle (\vr);
\draw (v010100)  [fill=black] circle (\vr);
\draw (v010100)  [fill=black] circle (\vr);
\end{tikzpicture}
\caption{The graphs $\R_n$, for $n \in [5]$}
\label{fig:R_5}
\end{center}

\end{figure}
We can associate each hypercube $Q_k$ in $\R_n$ with its top vertex. The calculation of the generating function in proof of Proposition~8.2 is obtained by summing for all vertex $v \in V(\R_n)$ the contribution coming from hypercubes with top vertex $v$.  
It is then written erroneously that a vertex $v$ with down-degree $r$ contributes
\begin{equation*}
\sum_{k=0}^r {r \choose k } q^{r-k} x^k = 
(q+x)^r\,
\end{equation*}
while this contribution will also depend on the Hamming weight $w$ of $v$ since the distance to $0^n $  of a $Q_k$ with top vertex $v$ is $w-k$ and not $r-k$.
Fortunately by refining the approach of the authors of~\cite{EI-2021a} it is possible to to determine the correct generating function.

Let us first note that although $\R_n (n\ge7)$ is not an isometric graph of $Q_n$~(\cite[Proposition~8.1]{EI-2021a}) for any vertex $v$ of $\R_n$ with Hamming weight $w$ 
\begin{equation*}
d_{\R_n}(v,0^n)=d_{Q_n}(v,0^n)=w.
\end{equation*}
Indeed since $\R_n$ is a subgraph of $Q_n$, $d_{\R_n}(v,0^n)\geq w$. On the other hand, since by changing the first $1$ in a run of $1$s into a $0$ we obtain a vertex of $\R_n$ with weight $w-1$, it is easy to create a $v,0^n$-path whose length is $w$.

We define the {\em down-degree co-weight polynomial} of $\R_n$ as the bivariate polynomial
\begin{equation*}
 DCW_{\R_n}(d,z)=\sum_{v\in V(\R_n)}d^{\deg_{\rm down} (v)}z^{w(v)-{\deg_{\rm down} (v)}}. 
\end{equation*}
where $w(v)$ and $\deg_{\rm down} (v)$ are, respectively, the weight and the down-degree of $v$.

The coefficient of the monomial $d^az^b $ in $DCW_{\R_n}(d,z)$ is thus the number of vertices with down-degree $a$ and Hamming weight $a+b$.

We proceed like the determination of the generating function of the down-degree enumerator polynomials for $\R_n${~\cite[Proposition~7.1]{EI-2021a}}.
\begin{proposition}
\label{prop:RdownW}
	The generating function of down-degree co-weight enumerator polynomials of Fibonacci-run graphs is
	\begin{equation*}
\sum_{n \geq 0} 
	t^n \sum_{v\in V(\R_n)}d^{{\rm deg}_{\rm down}(v)}z^{{\rm w}(v)-{{\rm deg}_{\rm down}(v)}} = 
\frac{1+dt+(d-z)t^2+d(d-z)t^3+d(d-z)t^4}{1-t-zt^2-(d-z) t^3 -d(d-z)t^5} \,.
\end{equation*}
\end{proposition}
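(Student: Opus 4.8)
The plan is to exploit the free monoid structure of run-constrained strings. Recall from the preliminaries that $\M$, the set of run-constrained strings (including $\lambda$ and $0$), is generated freely by $R=\{0,100,11000,1110000,\dots\}$, and that $\M=\{\lambda,0\}\cup\bigcup_{n\ge 0}V(\R_n)00$. So a generic nonempty run-constrained string that is not the singleton $0$ has the form $v00$ with $v\in V(\R_n)$ for $n=l(v00)-2$. The idea is to set up a concatenation-multiplicative weighting $s\mapsto\theta_s$ on $\M$ such that, for $s=v00$, the monomial $\theta_s$ records exactly the contribution of the vertex $v$ to $DCW_{\R_n}(d,z)$, namely $d^{\deg_{\rm down}(v)}z^{w(v)-\deg_{\rm down}(v)}$, together with a bookkeeping variable $t$ tracking $l(s)$. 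Then $\sum_{s\in\M}\theta_s$ will, after peeling off the two exceptional strings $\lambda$ and $0$ and correcting the exponent shift coming from the trailing $00$, yield the claimed generating function via Proposition~\ref{pro:mono}, which says $\sum_{s\in\M}\theta_s=1/\bigl(1-\sum_{e\in R}\theta_e\bigr)$.

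First I would make precise how $w(v)$ and $\deg_{\rm down}(v)$ decompose along the factorization of $v00$ into letters of $R$. The weight is immediate: $w$ is additive under concatenation, and the letter $1^i0^{i+1}$ has weight $i$, so $w(v00)=\sum$ over the factors. The down-degree is the subtle part: $\deg_{\rm down}(v)$ counts the positions of $v$ where flipping a $1$ to a $0$ stays inside $\R_n$, equivalently the number of runs of $1$s in $v$ whose first bit can legally be deleted. As in the proof of~\cite[Proposition~7.1]{EI-2021a}, this is "almost" additive over the block decomposition: each block $1^i0^{i+1}$ with $i\ge 1$ contributes one to the down-degree via its own leading $1$, except that the very last block of $v00$ before the forced trailing $00$ needs separate treatment (a run of $1$s at the end behaves differently from an internal one), and the singleton-$0$ blocks contribute nothing. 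I would therefore follow~\cite{EI-2021a} and split the letters of $R$ into the letter $0$ and the letters $1^i0^{i+1}$ with $i\ge 1$, assigning $\theta_0$ to account for a $z^0 d^0$ contribution and $\theta_{1^i0^{i+1}}$ a term carrying $d z^{i-1}$ (one $d$ for the deletable leading $1$, weight $i$ split as $1$ "down" plus $i-1$ "co-weight") times the appropriate power of $t$; summing the geometric-type series $\sum_{i\ge1}dz^{i-1}t^{2i+1}$ plus the $t$ from the letter $0$ gives $\sum_{e\in R}\theta_e$, and $1/(1-\sum_{e\in R}\theta_e)$ is a rational function in $t,d,z$.

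The remaining work is bookkeeping: the raw sum $\sum_{s\in\M}\theta_s$ over-counts because (i) $s=\lambda$ and $s=0$ are not of the form $v00$, and (ii) for $s=v00$ we want $t^{l(v)}=t^{l(s)-2}$, not $t^{l(s)}$, and we must correct the down-degree contribution of the final genuine block, since inside $v$ a trailing run of $1$s loses a deletable position compared to the same block sitting internally. Concretely I expect to write $\sum_{n\ge0}t^n\,DCW_{\R_n}(d,z)$ as $t^{-2}\bigl(\text{(sum over }\M\text{)}-1-t\cdot\theta_0\text{-type term}\bigr)$ with a further rational correction factor handling the last-block discrepancy, and then verify that this collapses to the stated right-hand side $\frac{1+dt+(d-z)t^2+d(d-z)t^3+d(d-z)t^4}{1-t-zt^2-(d-z)t^3-d(d-z)t^5}$. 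I anticipate the main obstacle to be exactly this last-block analysis — getting the down-degree of the terminal run of $1$s right, and folding the two exceptional short strings and the $t^{-2}$ shift cleanly into a single rational expression — rather than anything conceptually deep; once the correct modified alphabet and modified final factor are identified, the verification is a routine (if slightly tedious) manipulation of rational functions, and as a sanity check I would confirm that setting $d=z$ recovers~\cite[Proposition~7.1]{EI-2021a} and that the coefficient of $t^5$ reproduces the corrected $D_{\R_5}$ computed above after the substitution linking $DCW$ to $D_{\R_n}(x,q)$.
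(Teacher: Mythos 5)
Your overall strategy---weight each letter of the alphabet $R$ with a concatenation-multiplicative monomial, sum via Proposition~\ref{pro:mono}, then strip off $\lambda$ and $0$ and divide by $t^2$---is exactly the paper's, but the concrete combinatorial content you assign to the letters is wrong, and it is precisely the content that produces the nontrivial $t^5$ term in the answer. You claim each block $1^i0^{i+1}$ with $i\ge 1$ contributes ``one $d$ for the deletable leading $1$,'' i.e.\ $dz^{i-1}t^{2i+1}$. In fact, for $i\ge 2$ \emph{two} ones in such a block can be individually switched to $0$ while staying in $\M$: the first (giving $0\cdot 1^{i-1}0^{i}\cdot 0$) and the last of the run (giving $1^{i-1}0^{i}\cdot 0\cdot 0$); only the interior ones are forbidden. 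So the correct contributions are $t$, $dt^3$, and $d^2z^{i-2}t^{2i+1}$ for $i\ge 2$, summing to $t+dt^3+\frac{d^2t^5}{1-zt^2}$. With your weights the sum over $R$ is $t+\frac{dt^3}{1-zt^2}$ and the denominator of the generating function comes out as $1-t-zt^2-(d-z)t^3$, missing the $-d(d-z)t^5$ term of the stated result. No amount of boundary bookkeeping repairs this, because the discrepancy occurs at every internal block, not just the last one.

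Relatedly, the ``last-block correction factor'' you anticipate is both unnecessary and structurally problematic. Since $V(\R_n)=\{s : s00\in\V_{n+2}\}$, flipping a $1$ in $s$ keeps you in $V(\R_n)$ if and only if flipping the same $1$ in $s00$ keeps you in $\M$; the appended $00$ is already part of the monoid element, so $\deg_{\rm down}(s)=r(s00)$ exactly, with no asymmetry between a terminal and an internal run of $1$s. A weighting that treats the final letter differently would no longer be concatenation multiplicative and would forfeit the direct use of Proposition~\ref{pro:mono} (you would instead need a decomposition of the form $\M'\cdot(\text{last letter})$, which is not what the paper does and is not needed). Finally, as a small point, your proposed sanity check is off: setting $d=z$ in $DCW_{\R_n}$ gives the weight enumerator $\sum_v z^{w(v)}$, whereas the down-degree enumerator of~\cite[Proposition~7.1]{EI-2021a} is recovered by setting $z=1$.
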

\begin{proof}
Let us recall that the strings of $\bigcup_{n\geq 0} V(\R_n)$ are obtained from the strings of length at least 2 in the set $\M$ of  run-constrained strings after suppressing the tail $00$. $\M$ is generated as a monoid by  the infinite alphabet $R= \{0,100,11000,1110000,\ldots\}$.
 Associate with a string in $\M$ of length $n$ the monomial $d^rz^{w-r}t^n $ where $w$ is the number of $1$s and $r$ the number of $1$s that can be switched individually to $0$ while remaining in $\M$.

For example consider $s= 01001110000$. Since $00001110000$,$01000110000$ and $01001100000$ are in $\M$ but not
$01001010000$ we have  $r=3$ and the monomial associated to $s$ is $d^3zt^{11}$.

 Assume $s=a_1a_2\ldots a_k$ where the $a_i$s are in $R$. Then a 1 belongs to a unique $a_i$ like the run of $1$s of which it is part. Furthermore it can be switched to 0 in $s$ if and only if it is the first or the last of its run thus if and only if it can be switched to $0$ in $a_i$. Thus $s \rightarrow d^r$ is concatenation multiplicative over $\M$. Since $w \geq r$ and $s\rightarrow z^w$ is concatenation multiplicative $s\rightarrow z^{w-r}$ satisfies the same property. By product this is also true for $s \rightarrow d^rz^{w-r}t^n$.

We can thus apply Proposition~\ref{pro:mono}.

Consider the contributions of the strings from the alphabet $R$.
	
The string $0$ contributes $t$, $100$ contributes 
	$dt^3$, but longer strings $1^k 0^{k+1}k,k \geq2$ contribute $d^2z^{k-2}t^{2k+1}$, 
since only the first and the last $1$ appearing can be switched to $0$. 
The strings from $R$ give 
	\begin{equation*}
	t + dt^3 + d^2t^5 + d^2zt^7 + \cdots  = t + dt^3 + \frac{d^2 t^5}{1-zt^2} \,.
\end{equation*}
	Therefore, using Proposition~\ref{pro:mono}, the total contribution of the elements of $\M=\{\lambda,0\}\cup(\bigcup_{n\geq 0}{V(\R_n)}00$ is 
	\begin{equation*}
	\frac{1}{
		1 -t - dt^3 - \frac{d^2 t^5}{1-zt^2}} \,.
\end{equation*}
	From this we need to subtract the terms $1, t,$ which correspond to the null-word and  $0$  respectively and we divide by $t^2$ to shorten the length by 2. We obtain
\begin{equation*}
\frac{1-zt^2}{t^2(1-t-zt^2-(d-z) t^3 -d(d-z)t^5) }-\frac{1+t}{t^2}=\frac{1+dt+(d-z)t^2+d(d-z)t^3+d(d-z)t^4}{1-t-zt^2-(d-z) t^3 -d(d-z)t^5}  \,.
\end{equation*}

	For a string in $V(\R_n)$, $w$ and $r$ can be interpreted as the Hamming weight and down-degree, respectively. Therefore  we obtain the proposition.
	
\end{proof}\qed

In the distance cube polynomial $D_{\R_n}(x,q)$ to each hypercube $Q_k$ in $\R_n$
we associate the monomial $q^d x^k$, where $d$ is the distance of the $Q_k$ to $0^n$. 
\begin{proposition}
\label{prop:gCubes}
The generating function 
 of $(D_{\R_n}(x,q))_{n\geq 0}$ is given by
	{\small
		\begin{equation*}
		\sum_{n\geq 0} D_{\R_n}(x,q) t^n =\frac{1+(q+x)t+ xt^2+x(q+x)t^3+x(q+x)t^4}{1-t-qt^2-xt^3-x(q+x)t^5}  \, .
		\end{equation*}

	}
\end{proposition}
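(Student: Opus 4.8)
The plan is to reduce the statement to Proposition~\ref{prop:RdownW} by the single substitution $d=q+x$, $z=q$, after re-expressing $D_{\R_n}(x,q)$ through the down-degree co-weight polynomial. First I would set up the dictionary between induced subcubes and their top vertices: by Proposition~\ref{pro:bt} every induced $Q_k$ of $\R_n$ has a unique top vertex $v=t(H)$, and its bottom vertex $b(H)$ is obtained from $v$ by switching exactly $k$ of the $1$s of $v$ to $0$, so the distance of $H$ to $0^n$ is $w(v)-k$ rather than $\deg_{\rm down}(v)-k$. Thus $D_{\R_n}(x,q)$ is computed by grouping the subcubes according to their top vertex and, for a fixed top vertex $v$, summing $x^k q^{w(v)-k}$ over all induced $Q_k$ having $v$ as top vertex.

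The combinatorial core is the claim that a vertex $v\in V(\R_n)$ with $\deg_{\rm down}(v)=r$ is the top vertex of exactly $\binom{r}{k}$ induced copies of $Q_k$. One direction is immediate: inside any such $Q_k$ the $k$ down-neighbours of $v$ are down-neighbours of $v$ in $\R_n$, so they correspond to a $k$-subset of the $r$ coordinates in which the bit $1$ of $v$ can be switched to $0$ within $\R_n$, and distinct subsets give distinct subcubes by Proposition~\ref{pro:bt}. For the converse I would check that switching \emph{any} subset of these $r$ coordinates again yields a vertex of $\R_n$; since $\R_n$ is an induced subgraph of $Q_n$, this is precisely what is needed for the $2^k$ strings involved to span an induced $Q_k$. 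Working in the monoid $\M$, the switchable $1$s are the first and the last $1$ of each run, and removing any of them from a block $1^\ell 0^m$ (with $m>\ell$ the run-constraint) only shortens runs of $1$s and lengthens the following run of $0$s, so the constraint is preserved, and runs coming from different letters of $R$ do not interact; this is a short case analysis, and in any case the fact is already implicit in the (otherwise correct) enumeration carried out in~\cite{EI-2021a}. Granting the claim, the contribution of $v$ is
\begin{equation*}
\sum_{k=0}^{r}\binom{r}{k}x^k q^{w(v)-k}=q^{w(v)-r}(q+x)^r ,
\end{equation*}
hence
\begin{equation*}
D_{\R_n}(x,q)=\sum_{v\in V(\R_n)}q^{\,w(v)-\deg_{\rm down}(v)}(q+x)^{\deg_{\rm down}(v)}=DCW_{\R_n}(q+x,q).
\end{equation*}

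Finally I would apply Proposition~\ref{prop:RdownW}: substituting $d=q+x$ and $z=q$ (a ring homomorphism on the coefficient ring, hence compatible with the formal power series in $t$) one gets $d-z=x$ and $d(d-z)=x(q+x)$, and the numerator and denominator of the generating function of Proposition~\ref{prop:RdownW} become $1+(q+x)t+xt^2+x(q+x)t^3+x(q+x)t^4$ and $1-t-qt^2-xt^3-x(q+x)t^5$ respectively, which is the asserted formula. I expect the only genuine obstacle to be the converse half of the $\binom{r}{k}$ claim — that the switchable coordinates of $v$ span a subcube that stays inside $\R_n$, not merely a subgraph — because $\R_n$ is not isometric in $Q_n$ for $n\ge 7$, so one might a priori fear that flipping several $1$s simultaneously escapes $\R_n$; the run-length bookkeeping above (or the reference) closes this gap, and everything else is a mechanical substitution.
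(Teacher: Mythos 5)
Your proposal is correct and follows essentially the same route as the paper: group the induced subcubes by their top vertex, show the contribution of a vertex $v$ with weight $w$ and down-degree $r$ is $q^{w-r}(q+x)^r$, conclude $D_{\R_n}(x,q)=DCW_{\R_n}(q+x,q)$, and substitute $d=q+x$, $z=q$ into Proposition~\ref{prop:RdownW}. The only difference is that you explicitly verify the converse half of the $\binom{r}{k}$ claim (that flipping any subset of the switchable $1$s of $v$ stays in $\M$), a point the paper's proof asserts without the run-length case analysis; your verification is sound.
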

\begin{proof}

Let $v$ be a vertex of $\R_n$ with weight $w$ and down-degree $r$. Select $k$ $1$s in its string representation among the $r$ that can be replaced with a $0$. By this way we obtain a copy of $Q_k$ in $\R_n$ with top vertex $v$
and conversely to any $Q_k$ with top vertex $v$ we can associate such a unique choice. The distance of this $Q_k$ to $0^n$ is $w-k$. The contribution to $D_{\R_n}(x,q)$ of the hypercubes with top vertex $v$ is thus 
\begin{equation*}
\sum_{k=0}^r {r \choose k } q^{w-k} x^k = q^{w-r}\sum_{k=0}^r {r \choose k } q^{r-k} x^k = q^{w-r}(q+x)^r\, .
\end{equation*}

By uniqueness of the top vertex of an hypercube we have thus
\begin{equation*}
D_{\R_n}(x,q)= \sum_{v\in V(\R_n)}q^{w(v)-{\deg_{\rm down} (v)}}(q+x)^{\deg_{\rm down} (v)}. 
\end{equation*}

Since
\begin{equation*}
 DCW_{\R_n}(d,z)=\sum_{v\in V(\R_n)}d^{\deg_{\rm down} (v)}z^{w(v)-{\deg_{\rm down} (v)}} 
\end{equation*}
 we deduce
\begin{equation}
D_{\R_n}(x,q)=DCW_{\R_n}(q+x,q).
\end{equation}

Replacing $d$ by $q+x$ and $z$ by $q$ in the 
generating function in Proposition~\ref{prop:RdownW} gives the expression.
\end{proof}\qed
 
The values of $D_{\R_n}(x,q)$ given for $ n \in [6] $ by   Proposition~\ref{prop:gCubes}  are
\begin{align*}
	&1+q+x,\\
	&1+2q + 2x,\\
	&1+3q+q^2 + (3 +2q) x + x^2,\\
	&1+4q+3q^2+ (4+6q)x + 3 x^2,\\
	&1+5q+6q^2+q^3+(5+12q+2q^2)x + (6+q) x^2, \\
	& 1 + 6q + 10q^2 + 4q^3 + (6+20q+10q^2)x +(10+8q)x^2 + 2 x^3.
\end{align*}

Note that after contacting the authors of~\cite{EI-2021a} there is a typographical error in this article concerning the generating function of the up-degree~\cite[Proposition 7.2]{EI-2021a}. This one obtained as a specialization of the up-down degree polynomial~\cite[Theorem 5.1]{EI-2021b} is in fact
\begin{equation*}
\sum_{n\geq1}t^n\sum_{v\in V(\R_n)}u^{\deg_{\rm up(v)}}=\frac{t(1+u-(u-2)t-2ut^2+(u-2)t^3-(u-1)t^5-(u-1)t^6)}{1-ut-2t^2+(2u-1)t^3+t^4-(u-1)t^5+(u-1)t^7}\,.
\end{equation*}

The first terms indicated in~\cite{EI-2021a} are correct because calculated by expansion of the expression without typo.

\section{Some complements about Lucas-run graphs}\label{sec:LR}
\bigskip

The specialization $q=1$ in the expressions of $D_{\R_n}(x,q)$ gives the cube polynomial. Note that for $q=1$ we have $(q+x)^r = q^{w-r}(q+x)^r$ and therefore the expression for $C_{\R_n}(x)$ that we  deduce from the erroneous expression of  $D_{\R_n}(x,q)$ in ~\cite[Proposition~8.2]{EI-2021a} already gave the correct value for the cube polynomial.

The  first values of $C_{\R_n^l}(x)$ obtained by direct  inspection  of figure \ref{fig:R_5c} suggest that, for $n\geq2$, $C_{\R_n^l}(x)=2C_{\R_{n-1}}(x)-C_{\R_{n-2}}(x)$~\cite[Conjecture~5.3]{W-2024a}. We will prove a stronger result: the equality occurs for the distance cube polynomial.
\begin{figure}[!ht]
\begin{center}
\begin{tikzpicture}[scale=0.65,style=thick]
\tikzstyle{every node}=[draw=none,fill=none]
\def\vr{3pt} 
\begin{scope}[xshift =0, yshift = 6cm]

\path (7,2.5) coordinate (v001);
\path (5,2.5) coordinate (v000);
\path (3,2.5) coordinate (v010);
\path (7,4.5) coordinate (v101);

\path (0,2.5) coordinate (v01);
\path (-2,2.5) coordinate (v00);
\path (-4,2.5) coordinate (v10);

\path (-9,2.5) coordinate (v0);

\draw (v10) -- (v00);\draw (v00) -- (v01);

\draw (v010) -- (v000) -- (v001);
\draw (v001) -- (v000) -- (v100);
\draw (v010) -- (v000);

%
\draw (v0)  [fill=black] circle (\vr);

\draw (v01)  [fill=black] circle (\vr);
\draw (v00)  [fill=black] circle (\vr);
\draw (v10)  [fill=black] circle (\vr);

\draw (v001)  [fill=black] circle (\vr);
\draw (v000)  [fill=black] circle (\vr);
\draw (v010)  [fill=black] circle (\vr);
\draw (v100)  [fill=black] circle (\vr);
\draw[right] (v0)++(-0.1,-0.3) node {$0$};
\draw[right] (v10)++(0.1,-0.3) node {$10$};
\draw[right] (v00)++(-0.1,-0.3) node {$00$};
\draw[right] (v01)++(-0.1,-0.3) node {$01$};
\draw[left] (v100)++(-0.0,0) node {$100$};
\draw[right] (v010)++(0.1,-0.3) node {$001$};
\draw[right] (v000)++(-0.1,-0.3) node {$000$};
\draw[right] (v001)++(-0.1,-0.3) node {$010$};
\end{scope}
\begin{scope}[yshift = 0cm, xshift = 0cm]
\path (5,0) coordinate (v001001);
\path (2.5,0) coordinate (v001000);
\path (0,0) coordinate (v001010);
\path (5,2.5) coordinate (v000001);
\path (2.5,2.5) coordinate (v000000);
\path (0,2.5) coordinate (v000010);
\path (5,5) coordinate (v000101);
\path (2.5,5) coordinate (v000100);
\path (7.0,4) coordinate (v010001);
\path (4.5,4) coordinate (v010000);
\path (2.0,4) coordinate (v010010);
\path (7.0,1.5) coordinate (v010101);
\path (4.5,6.5) coordinate (v010100);
\draw (v001001) -- (v001000) -- (v001010) -- (v000010) -- (v000000) -- (v000001) -- (v001001);
\draw (v001000) -- (v000000) -- (v000100) -- (v000101) -- (v000101) -- (v000001);

\draw (v000001) -- (v010001) -- (v010000);

\draw (v010000) -- (v010010);
\draw (v010010) -- (v000010);
\draw (v000000) -- (v010000);
\draw (v001000)  [fill=white] circle (\vr);
\draw (v001010)  [fill=white] circle (\vr);
\draw (v000001)  [fill=white] circle (\vr);
\draw (v000000)  [fill=white] circle (\vr);
\draw (v000010)  [fill=white] circle (\vr);
\draw (v010001)  [fill=white] circle (\vr);
\draw (v010000)  [fill=white] circle (\vr);
\draw (v010010)  [fill=white] circle (\vr);
\draw (v000101)  [fill=white] circle (\vr);
\draw (v000100)  [fill=white] circle (\vr);
\end{scope}

\begin{scope}[xshift = -8cm, yshift = 0 cm]
\path (5,0) coordinate (v101001);
\path (2.5,0) coordinate (v101000);
\path (0,0) coordinate (v101010);
\path (5,2.5) coordinate (v100001);
\path (2.5,2.5) coordinate (v100000);
\path (0,2.5) coordinate (v100010);
\path (5,5) coordinate (v100101);
\path (2.5,5) coordinate (v100100);
\draw(v101001) -- (v101000);

\draw (v100010) -- (v100000) -- (v100001);
\draw (v100001) -- (v101001);

\draw (v101000) -- (v100000);
\draw (v100000) -- (v100100) -- (v100101) -- (v100001);

\draw (v101001)  [fill=black] circle (\vr);
\draw (v101000)  [fill=black] circle (\vr);
\draw (v100001)  [fill=black] circle (\vr);
\draw (v100000)  [fill=black] circle (\vr);
\draw (v100010)  [fill=black] circle (\vr);
\draw (v100101)  [fill=black] circle (\vr);
\draw (v100100)  [fill=black] circle (\vr);

\draw[left] (v100100)++(-0.0,0) node {$0010$};
\draw[right] (v100101)++(0.0,0.1) node {$0110$};
\draw[right] (v100010)++(0.1,-0.3) node {$0001$};
\draw[right] (v100000)++(-0.1,-0.3) node {$0000$};
\draw[right] (v100001)++(-0.1,-0.3) node {$0100$};
\draw[below] (v101000)++(0.0,-0.1) node {$1000$};
\draw[below] (v101001)++(0.0,-0.1) node {$1100$};

\draw[right] (v000101)++(0.0,0.1) node {$01001$};
\draw[left] (v000100)++(0.0,-0.0) node {$00001$};
\draw[left] (v010000)++(0.0,0.25) node {$10000$};
\draw[left] (v010010)++(0.0,0.25) node {$10010$};
\draw[right] (v010001)++(0,0.25) node {$11000$};
\draw[right] (v000001)++(-0.1,-0.3) node {$01000$};
\draw[right] (v000010)++(-0.1,-0.3) node {$00010$};
\draw[right] (v000000)++(-0.1,-0.3) node {$00000$};
\draw[below] (v001000)++(0,-0.1) node {$00100$};
\draw[below] (v001001)++(0,-0.1) node {$01100$};
\draw[below] (v001010)++(0.0,-0.1) node {$00110$};

\end{scope}
\draw (v001001)  [fill=black] circle (\vr);
\draw (v001000)  [fill=black] circle (\vr);
\draw (v001010)  [fill=black] circle (\vr);
\draw (v000001)  [fill=black] circle (\vr);
\draw (v000000)  [fill=black] circle (\vr);
\draw (v000010)  [fill=black] circle (\vr);
\draw (v010001)  [fill=black] circle (\vr);
\draw (v010000)  [fill=black] circle (\vr);
\draw (v010010)  [fill=black] circle (\vr);
\draw (v000101)  [fill=black] circle (\vr);
\draw (v000100)  [fill=black] circle (\vr);
\end{tikzpicture}
\caption{The graphs $\R_n^l$, for $n \in [5]$}
\label{fig:R_5c}
\end{center}
\end{figure}
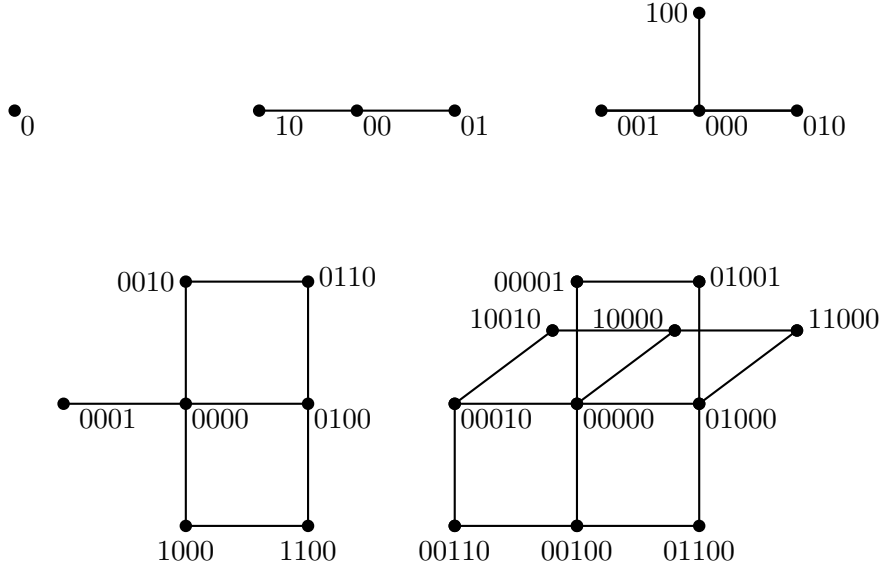

\begin{lemma}
\label{lem:begin1}
A string $s$ in $V(\R_n)$ that starts with $0$ belongs to $V(\R_n^l)$.
		A string $s$ in $V(\R_n)$ that start with $1$ belongs to $V(\R_n^l)$ if and only if $s00$ belongs to $\M0$.
\end{lemma}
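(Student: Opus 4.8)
The plan is to observe that, by the definition of $\R_n^l$, a string $s$ lies in $V(\R_n^l)$ exactly when $s\in V(\R_n)$ and, in addition, $s0\in\V_{n+1}^c$. Since $s\in V(\R_n)$ is assumed throughout the statement, the entire lemma reduces to deciding when $s0$ is circular-run-constrained. I would first record two elementary facts: (i) appending $0$'s to a word creates no new run of $1$'s and changes no run of $0$'s other than the trailing one, so $s$, $s0$ and $s00$ have the same runs of $1$'s and passing from $s00$ to $s0$ merely shortens the trailing run of $0$'s by one; and (ii) $s00\in\M$ (which is what $s\in V(\R_n)$ means) says precisely that in $s00$ every run of $1$'s of length $k$ is immediately followed by a run of $0$'s of length at least $k+1$.

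Then I would split on the first letter of $s$ (which exists, as $s\neq\lambda$). If $s$ begins with $0$, then $s0$ begins and ends with $0$, so in the cyclic reading of $s0$ the leading and trailing runs of $0$'s coalesce into a single run. Every run of $1$'s of $s0$ is then immediately followed, cyclically, either by an interior run of $0$'s — one lying strictly inside $s$, hence identical with the corresponding run of $s00$ and thus long enough by (ii) — or, in the case of the last run of $1$'s, say of length $k$, by the coalesced run of $0$'s, whose length is at least $(\text{trailing run of }s0)+(\text{leading run of }s)\ge k+1>k$, since the trailing run of $s00$ had length $\ge k+1$ and the leading run of $s$ is a nonempty run of $0$'s. (If $s$ contains no $1$ at all then $s0=0^{\,n+1}$ and the claim is immediate.) Hence $s0\in\V_{n+1}^c$ and $s\in V(\R_n^l)$, which is the first assertion.

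If instead $s$ begins with $1$, then $s0$ begins with $1$ and ends with $0$, so its first and last runs carry different symbols and no coalescing occurs at the cyclic seam: the cyclic runs of $s0$ are exactly its linear runs, and the cyclic reading adds only the adjacency ``trailing run of $0$'s followed by leading run of $1$'s'', which imposes no condition. Moreover, since $s0$ ends with $0$, every run of $1$'s of $s0$, the last one included, is already followed linearly by a nonempty run of $0$'s. Therefore $s0$ is circular-run-constrained if and only if it is linearly run-constrained, i.e. if and only if $s0\in\M$; writing $s00=(s0)0$, this is exactly $s00\in\M0$. Combined with the assumption $s\in V(\R_n)$, this gives the second assertion (and also disposes of the small values of $n$).

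The step I expect to require the most care is the cyclic-seam bookkeeping in the ``$s$ begins with $0$'' case: one must verify that it is precisely the last run of $1$'s whose following run of $0$'s is lengthened by the merge, and that the single extra $0$ contributed by the nonempty leading run of $s$ is exactly what restores the strict inequality that was lost in shortening $s00$ to $s0$. Once that point is pinned down, both directions of each equivalence are routine.
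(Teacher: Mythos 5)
Your proof is correct and follows essentially the same route as the paper's: split on the first letter of $s$, note that for $s$ starting with $0$ the trailing run of $0$s of $s0$ (of length at least $p$, where $p$ is the length of the last run of $1$s) coalesces cyclically with the nonempty leading run of $0$s, and for $s$ starting with $1$ reduce the cyclic condition to the trailing run of $s0$ having length at least $p+1$, i.e.\ $s00\in\M0$. The only cosmetic difference is that the paper expresses this last equivalence via the unique factorization $s00=a_1\ldots a_k$ over the alphabet $R$ and the condition $a_k=0$, whereas you pass through the equivalent statement $s0\in\M$.
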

\begin{proof}
Write $s00=a_1a_2\ldots a_k$ with $a_i\in R$ for $i\in[k]$ and let  $p$ be the length of the last run of $1$s. This last run of $1$s belongs to some $a_j$ and is followed in $a_j$ by exactly $p+1$ $0$s.

 If $s$ starts with $0$. Since $s00$ ends with at least $p+1$ $0$s it is clear that $s0$ is a circular-run-constrained string.

 Assume now that $s$ starts with $1$ thus $a_1\neq0$. Then  $s0$ is a circular-run-constrained string if and only if $s0$ ends with at least $p+1$ $0$s. This is satisfied if and only if $s00$ ends with at least $p+2$ $0$s. This is not the case if the last run of $1$ belongs to $a_k$,  and thus this is  equivalent to $a_k=0$. Since $a_1\neq0$ we have $k\geq 2$, and $a_k=0$ if and only $s00$ is the concatenation of $a_1s_2\ldots a_{k-1}$ with $0$.
\end{proof}\qed

Let $\M^l$ be the set of  run-constrained strings that came from a Lucas-run graph, in other words  $\M^l=\bigcup_{n\geq 0}{V(\R_n^l)00}\,\subset\,\M$.

\begin{corollary}
\label{corr:OM}\label{eq:fib-lucas2}
\begin{equation}\label{eq:Ml}
		\M^l=(0\M \setminus \{0\})\cup ((\M0 \setminus \{0\}) \setminus 0\M0) \, .
\end{equation}
\end{corollary}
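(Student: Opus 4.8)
The plan is to establish the set equality in \eqref{eq:Ml} by characterizing membership in $\M^l$ through Lemma~\ref{lem:begin1}, and then matching this against the right-hand side. Recall that $\M^l=\bigcup_{n\geq 0}V(\R_n^l)00$ and that by definition $V(\R_n^l)\subseteq V(\R_n)$, so every element of $\M^l$ is an element of $\M$ of length at least $2$ (or the short words $\lambda, 0$, which we should check separately — in fact $\R_0^l$ gives $\lambda$, and the string $0$ corresponds to a vertex of $\R_1^l$ since $000\in\V_3$ and $00\in\V_2^c$, so both $\lambda00$ and $000$ lie in $\M^l$; these are exactly $00$ and $000$ once we append the tail, so one must be careful about which short strings appear — I would verify this bookkeeping against the definitions at the start). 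The core dichotomy is: a string $s00\in\M$ of length $\geq 2$ lies in $\M^l$ iff either $s$ starts with $0$, or $s$ starts with $1$ and $s00\in\M0$, by Lemma~\ref{lem:begin1}.

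The main steps I would carry out are as follows. First, I would translate "$s$ starts with $0$'' into a statement about $s00$: writing $w=s00$, the string $s$ starts with $0$ exactly when $w\in 0\M$ (since deleting the leading $0$ of $w$ leaves $s'00$ where $s'$ is $s$ with its leading $0$ removed, and one checks $s'00\in\M$; conversely if $w=0m$ with $m\in\M$ then $m$ has the form $s'00$). Here I would need the fact that $\M$ is closed under deleting a leading $0$-letter, which follows from the free monoid structure over $R$ since $0\in R$ and any $w\in\M$ beginning with $0$ factors as $0\cdot(\text{rest})$. Second, for the "$s$ starts with $1$'' case, Lemma~\ref{lem:begin1} already gives the condition $s00\in\M0$, i.e. $w\in\M0$; combined with "$s$ starts with $1$'' which says $w\notin 0\M$ (more precisely $w$ does not begin with the letter $0$). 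So $\M^l$ consists of: elements of $\M$ of length $\geq 2$ that are in $0\M$, together with elements of $\M$ of length $\geq 2$ that are in $\M0$ but not in $0\M$.

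Then I would reconcile this with the right-hand side $(0\M\setminus\{0\})\cup((\M0\setminus\{0\})\setminus 0\M0)$. The set $0\M\setminus\{0\}$ is precisely the strings in $0\M$ of length $\geq 2$ (the only length-$1$ element of $0\M$ is $0=0\lambda$), and every such string is automatically in $\M$; so this matches the first part. For the second part, I claim $(\M0\setminus\{0\})\setminus 0\M0$ equals the strings in $\M\cap\M0$ of length $\geq 2$ that do not begin with $0$. The subtlety is the difference between "$w\notin 0\M$'' (does not begin with the letter $0$) and "$w\notin 0\M0$''. A string $w\in\M0$ that begins with $0$ — does it necessarily lie in $0\M0$? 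Yes: if $w$ begins with $0$ then $w=0m$ with $m\in\M$, and if moreover $w\in\M0$ then $w=m'0$; one must argue $m'$ itself begins with... no — rather, one shows directly $m=m''0$ so $w=0m''0\in 0\M0$; this uses that $m\in\M$ ending in $0$ (which it does, being $w$ minus leading $0$, and $w\in\M0$ so $w$ ends in $0$) can be factored with a trailing $0$-letter, again from freeness over $R$ together with the structure of $R$ (every nonempty element of $R$ other than $0$ ends in $00$, so a trailing single $0$ that can be peeled must be the letter $0$). So within $\M0\setminus\{0\}$, being outside $0\M0$ is the same as being outside $0\M$, i.e. not beginning with $0$, which is exactly the "$s$ starts with $1$'' condition. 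That closes the loop.

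The main obstacle I anticipate is the careful handling of the factorization/cancellation arguments in the free monoid $\M$ over the infinite alphabet $R$ — specifically, proving that "$w$ begins (ends) with the bit $0$'' is equivalent to "$w$ begins (ends) with the \emph{letter} $0\in R$'', and hence that leading/trailing single zeros can be peeled off to give another element of $\M$. This requires noting that every letter in $R$ other than $0$ itself begins with $1$ and ends with $00$, so the first (last) letter in the unique factorization of $w$ is forced to be $0$ whenever $w$ starts (ends) with a single isolated $0$ bit, or more generally whenever $w$ starts with $0$ the first letter is $0$. The other place to be careful is the treatment of the short strings $0$, $00$, $000$ and $\lambda$, making sure the "$\setminus\{0\}$'' terms on the right exactly account for the degenerate cases and that nothing of length $\geq 2$ is wrongly included or excluded; a direct check against $V(\R_n^l)$ for small $n$ using Figure~\ref{fig:R_5c} is a useful sanity check here.
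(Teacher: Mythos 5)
Your overall strategy is exactly the paper's: use Lemma~\ref{lem:begin1} to split $\M^l$ according to the first bit of $s$, identify the strings starting with $0$ with $0\M\setminus\{0\}$, and the strings starting with $1$ with $(\M0\setminus\{0\})\setminus 0\M0$. The paper's own proof is three sentences and silently asserts the one point you correctly single out as the real content, namely that a string of $\M0$ of length at least $2$ that begins with the bit $0$ must lie in $0\M0$. So your instinct about where the work is located is right.

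However, the justification you give for that point is flawed. You write $w=0m$ with $m\in\M$ and argue that since $m$ ends in the bit $0$ it ``can be factored with a trailing $0$-letter,'' i.e.\ $m=m''0$ with $m''\in\M$. As a general statement about $\M$ this is false: $100\in\M$ ends in $0$, but $10\notin\M$, so $100$ admits no factorization with a trailing letter $0$; your parenthetical (``every nonempty element of $R$ other than $0$ ends in $00$, so a trailing single $0$ that can be peeled must be the letter $0$'') only constrains what a peelable trailing letter would have to be, it does not show one exists. The conclusion you want is nevertheless true, and the correct route is the one you started and then explicitly discarded: from $w\in\M0$ write $w=m'0$ with $m'\in\M$; since $w$ has length at least $2$ and begins with the bit $0$, so does $m'$, hence the \emph{first} letter of $m'$ in its unique factorization over $R$ is $0$ (all other letters begin with $1$), giving $m'=0m''$ with $m''\in\M$ and $w=0m''0\in 0\M0$. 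With that one step repaired, your argument is complete and coincides with (indeed, is more careful than) the proof in the paper, including the bookkeeping for the short strings $00$ and $000$.
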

\begin{proof}
The strings of  $0\M$ are the strings of $\M$ that start with $0$.

Therefore 
those  of $0\M \setminus \{0\}$ are the strings of $\bigcup_{n\geq 0}{V(\R_n)00}$ that start with $0$. By Lemma~\ref{lem:begin1} they are the strings of $ \M^l$ that start $0$. 

The strings of $(\M0 \setminus \{0\}) \setminus 0\M0 $ are the strings of $\M0$ of length at least 2 that do not start with  $0$ thus by the same Lemma they are  the strings of $ \M^l$ that start $1$. 
\end{proof}\qed

We can now prove our main result in this section.
\begin{theorem}
\label{conjjecturegen}
$D_{\R_n^l}(x,q)= 2D_{\R_{n-1}}(x,q)-D_{R_{n-2}}(x,q)$ holds for any $n\geq2$.
\end{theorem}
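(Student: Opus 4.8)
The plan is to compute the generating function $\sum_{n\ge0}D_{\R_n^l}(x,q)\,t^n$ directly from the description of $\M^l$ in Corollary~\ref{corr:OM}, in the same spirit as the proofs of Propositions~\ref{prop:RdownW} and~\ref{prop:gCubes}, and then to read off the coefficient of $t^n$. First I would establish, exactly as in Proposition~\ref{prop:gCubes}, the vertex formula
\[
D_{\R_n^l}(x,q)=\sum_{v\in V(\R_n^l)}q^{\,w(v)-r(v)}(q+x)^{r(v)},
\]
where $r(v)$ is the down-degree of $v$ \emph{in $\R_n$}, i.e.\ the number of $1$s of $v$ that are first or last in their run. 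Here $\R_n^l$ is an induced subgraph of $\R_n$, and a copy of $Q_k$ in $\R_n$ with top vertex $v$ is obtained from $v$ by switching to $0$ an arbitrary subset of $k$ chosen run-endpoint $1$s; switching such a $1$ to $0$ only shortens a run of $1$s, without splitting it, and lengthens a run of $0$s, hence preserves circular-run-constrainedness. Thus the copies of $Q_k$ in $\R_n^l$ are precisely the copies of $Q_k$ in $\R_n$ whose top vertex lies in $V(\R_n^l)$, with unchanged bottom vertices and down-degrees; and since $d_{\R_n^l}(v,0^n)=w(v)$ for every $v\in V(\R_n^l)$ (repeatedly flipping the leftmost $1$ stays in $\R_n^l$), such a $Q_k$ is at distance $w(v)-k$ from $0^n$. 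Passing to the monoid with $m(s):=(q+x)^{r(s)}q^{\,w(s)-r(s)}t^{l(s)}$ on $\M$ (the weight of the proof of Proposition~\ref{prop:RdownW} specialized at $d=q+x$, $z=q$), the decompositions $\M=\{\lambda,0\}\cup\bigcup_{n\ge0}V(\R_n)00$ and $\M^l=\bigcup_{n\ge0}V(\R_n^l)00$, together with Proposition~\ref{prop:gCubes} and the formula above, give
\begin{align*}
\widetilde G:=\sum_{s\in\M}m(s)&=1+t+t^2\sum_{n\ge0}D_{\R_n}(x,q)\,t^n,\\
\sum_{u\in\M^l}m(u)&=t^2\sum_{n\ge0}D_{\R_n^l}(x,q)\,t^n .
\end{align*}

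Next I would evaluate $\sum_{u\in\M^l}m(u)$. Prepending or appending a $0$ to a string of $\M$ changes neither its Hamming weight nor its set of switchable $1$s, so $m(0s)=m(s0)=t\,m(s)$ and $m(0s0)=t^2m(s)$ for every $s\in\M$; since the maps $s\mapsto 0s$, $s\mapsto s0$, $s\mapsto 0s0$ are injective, $\sum_{u\in 0\M}m(u)=\sum_{u\in\M0}m(u)=t\widetilde G$ and $\sum_{u\in 0\M0}m(u)=t^2\widetilde G$. Corollary~\ref{corr:OM} exhibits $\M^l$ as the disjoint union $(0\M\setminus\{0\})\cup\bigl((\M0\setminus\{0\})\setminus 0\M0\bigr)$, with $0\M0\subseteq\M0$ and $0\notin 0\M0$, so (using $m(0)=t$)
\[
\sum_{u\in\M^l}m(u)=(t\widetilde G-t)+(t\widetilde G-t-t^2\widetilde G)=(2t-t^2)\widetilde G-2t .
\]
Combining this with the two identities above,
\begin{align*}
\sum_{n\ge0}D_{\R_n^l}(x,q)\,t^n
&=\frac{(2t-t^2)\widetilde G-2t}{t^2}\\
&=\frac{(2t-t^2)\bigl(1+t+t^2\sum_{n\ge0}D_{\R_n}(x,q)\,t^n\bigr)-2t}{t^2}\\
&=(2t-t^2)\sum_{n\ge0}D_{\R_n}(x,q)\,t^n+1-t,
\end{align*}
and comparing coefficients of $t^n$ for $n\ge2$ yields $D_{\R_n^l}(x,q)=2D_{\R_{n-1}}(x,q)-D_{\R_{n-2}}(x,q)$; the term $1-t$ affects only $n\le1$, consistently with $D_{\R_0^l}=D_{\R_1^l}=1$.

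The algebra of the last two displays is routine; the one genuinely delicate point is the first step, namely verifying that the extra circular constraint destroys no cube of $\R_n$ lying above a vertex of $V(\R_n^l)$ — equivalently, that switching a run-endpoint $1$ to $0$ never breaks circular-run-constrainedness, so that a vertex of $\R_n^l$ retains its full $\R_n$-down-degree and still lies at distance $w(v)$ from $0^n$. As an alternative to the monoid computation one can argue combinatorially: writing $D_{\R_n}=A_n+B_n$, where $A_n$ (resp.\ $B_n$) collects the vertices starting with $0$ (resp.\ $1$), prepending a $0$ identifies the $0$-vertices of $\R_n$ with $V(\R_{n-1})$, so $A_n=D_{\R_{n-1}}$; by Lemma~\ref{lem:begin1} the $0$-vertices of $\R_n^l$ are exactly those same vertices, while its $1$-vertices are precisely the strings $z0$ with $z$ a $1$-vertex of $\R_{n-1}$ (using that every string of $\M$ of length at least $2$ ends in $00$), so their total contribution equals $B_{n-1}=D_{\R_{n-1}}-D_{\R_{n-2}}$, whence $D_{\R_n^l}=A_n+B_{n-1}=2D_{\R_{n-1}}-D_{\R_{n-2}}$.
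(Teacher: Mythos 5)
Your proof is correct and follows essentially the same route as the paper's: both rest on the decomposition of $\M^l$ from Corollary~\ref{corr:OM}, a concatenation-multiplicative weight on the monoid $\M$, and the key verification that switching a run-endpoint $1$ preserves membership in $V(\R_n^l)$ so that a vertex of $\R_n^l$ keeps its full $\R_n$-down-degree (the paper routes this through the polynomial $DCW$ in the indeterminates $d,z$ and then specializes $d=q+x$, $z=q$, whereas you specialize from the start --- a cosmetic difference). Your closing bijective argument ($A_n=D_{\R_{n-1}}$ and the $1$-vertices of $\R_n^l$ being exactly the $z0$ with $z$ a $1$-vertex of $\R_{n-1}$) is a nice, more elementary alternative not in the paper, but the main argument is the same.
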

\begin{proof}
As in the proof of Proposition~\ref{prop:RdownW} associate with a string $s$ in $\M$ of length $n$ the monomial $\theta_s=d^rz^{w-r}t^n $ where $w$ is the number of $1$s and $r$ the number of $1$s that can be switched individually to $0$ while remaining in $\M$ and let us recall that  $s\rightarrow \theta_s$  is concatenation multiplicative over $\M$.

Note that $\sum_{s\in \M0}{\theta_s}=t\sum_{s\in \M}{\theta_s}$ and $\theta_{0s0}=t^2\theta_{s}$. From the equality~\ref{eq:Ml} we obtain
\begin{equation}\label{eq:theta}
\sum_{s\in \M^l}{\theta_s}=t(\sum_{s\in \M}{\theta_s}-1)+t(\sum_{s\in \M}{\theta_s}-1)-t^2\sum_{s\in \M}{\theta_s}\, .
\end{equation}

Since $\M=\{\lambda,0\}\cup(\bigcup_{n\geq 0}{V(\R_n)}00)$ the coefficient of $t^{m+2}$ in $\sum_{s\in \M}{\theta_s}$ is the coefficient of $t^{m+2}$ in $\sum_{s\in V(\R_m)}{\theta_{s00}}$. Because $r(s00)=r(s)$ and $w(s00)=w(s)$ it is the coefficient of $t^{m}$ in  $\sum_{s\in V(\R_m)}{\theta_{s}}$ and thus this coefficient is  $DCW_{\R_m}(d,z)$ the down-degree co-weight enumerator polynomial of $R_m$.

Therefore the coefficient of $t^{n+2}$ in the right side of~\ref{eq:theta} is 
\begin{equation*}
2DCW_{\R_{n-1}}(d,z)-DCW_{\R_{n-2}}(d,z)\, .
\end{equation*}

Let us now prove that the coefficient of $t^{n+2}$ in the left side is $DCW_{\R_{n}^l}(d,z)$. Since $w(s00)=w(s)$ and $r(s00)=r(s)$ we have $\sum_{s\in V(\R_n^l)00}{\theta_s}=t^2\sum_{s\in V(\R_n^l)}{\theta_s}$
and we are thus looking for the coefficient of $t^n$ in $\sum_{s\in V(\R_n^l)}{\theta_s}$.

Let $s\in V(\R_n^l)$ and assume that after switching the  $i$th $1$ in $s$ we obtain $s'\in\M$.

 If this $1$ doesn't belong to the last run of $1$s in $s$ it is immediate that $s'\in V(\R_n^l)$.

 Assume now this $1$ belongs to the last run. Then the length of the last run of $1$s is smaller in $s'$ than in $s$.  The length of the last run of $0$s is the same or increase by one if that $1$ is the last in the run. Thus in  both cases we have $s'\in V(\R_n^l)$. Therefore $r(s)$ is the down-degree of the vertex $s$ in $\R_n^l$.

 The coefficient of $t^{n+2}$ in the left side of~\ref{eq:theta} is thus $DCW_{\R_{n}^l}(d,z)$ and we obtain, for $n\geq 2$, the equality
\begin{equation}\label{eq:DCWLF}
DCW_{\R_{n}^l}(d,z)=2DCW_{\R_{n-1}}(d,z)-DCW_{\R_{n-2}}(d,z)\, . 
\end{equation}

The rest of the proof is similar to that of the case of Fibonacci-run graphs in Proposition~\ref{prop:gCubes}. Let $v$ be a vertex of $\R_n$ with weight $w$ and down-degree $r$. The contribution to $D_{\R_n^l}(x,q)$ of the hypercubes with top vertex $v$ is  $
\sum_{k=0}^r {r \choose k } q^{w-k} x^k =q^{w-r}(q+x)^r.$
Therefore 
\begin{equation*}
D_{\R_n^l}(x,q)= \sum_{v\in V(\R_n^l)}q^{w(v)-{\deg_{\rm down} (v)}}(q+x)^{\deg_{\rm down} (v)}= DCW_{\R_n^l}(q+x,q).
\end{equation*}
Since $D_{\R_n}(x,q)= DCW_{\R_n}(q+x,q)$ the conclusion follows from Equation \ref{eq:DCWLF}.

\end{proof}\qed
\begin{corollary}
\label{prop:lCubes} 
The generating function 
 of $(D_{\R_n^l}(x,q))_{n\geq 0}$ is given by
	{\small
		\begin{equation*}
		\sum_{n\geq 0} D_{\R_n}(x,q) t^n=\frac{1+(q+2x)t^2+2x(q+x)t^4}{1-t-qt^2-xt^3-x(q+x)t^5}    \, .
		\end{equation*}

	}
\end{corollary}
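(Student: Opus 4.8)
The plan is to feed the linear recurrence of Theorem~\ref{conjjecturegen} into the explicit generating function of Proposition~\ref{prop:gCubes}. Set
\begin{equation*}
F(t)=\sum_{n\geq 0}D_{\R_n}(x,q)\,t^n=\frac{1+(q+x)t+xt^2+x(q+x)t^3+x(q+x)t^4}{1-t-qt^2-xt^3-x(q+x)t^5},
\end{equation*}
which is known, and let $L(t)=\sum_{n\geq 0}D_{\R_n^l}(x,q)\,t^n$ be the series we must identify. Multiplying $F$ by $t$ (resp.\ $t^2$) shifts coefficients, so the coefficient of $t^n$ in $(2t-t^2)F(t)$ equals $2D_{\R_{n-1}}(x,q)-D_{\R_{n-2}}(x,q)$ for every $n\geq 2$; by Theorem~\ref{conjjecturegen} this is exactly the coefficient of $t^n$ in $L(t)$. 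Hence $L(t)-(2t-t^2)F(t)$ is a polynomial in $t$ of degree at most $1$, and the whole problem reduces to computing the two base values $D_{\R_0^l}(x,q)$ and $D_{\R_1^l}(x,q)$.

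Next I would read off these base values from the definition of $\R_n^l$. For $n\le 1$ the requirements $s0\in\V_{n+1}^c$ and $s00\in\V_{n+2}$ force $V(\R_0^l)=\{\lambda\}$ and $V(\R_1^l)=\{0\}$ (the string $1$ is excluded because $10$ is not circular-run-constrained), so each of these graphs is a single vertex at distance $0$ from $0^n$, whence $D_{\R_0^l}(x,q)=D_{\R_1^l}(x,q)=1$. Comparing with $[t^0](2t-t^2)F(t)=0$ and $[t^1](2t-t^2)F(t)=2D_{\R_0}(x,q)=2$, the correction polynomial is $(1-0)+(1-2)t=1-t$, so that
\begin{equation*}
L(t)=(2t-t^2)F(t)+1-t .
\end{equation*}

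Finally I would substitute the closed form of $F(t)$ and clear denominators. Writing $F(t)=N(t)/D(t)$ with $D(t)=1-t-qt^2-xt^3-x(q+x)t^5$, it suffices to verify the polynomial identity
\begin{equation*}
(2t-t^2)N(t)+(1-t)D(t)=1+(q+2x)t^2+2x(q+x)t^4 ,
\end{equation*}
after which $L(t)=\bigl(1+(q+2x)t^2+2x(q+x)t^4\bigr)/D(t)$, the asserted formula. Expanding the two products and collecting coefficients of $t^0,\dots,t^6$, the $t^1$, $t^3$, $t^5$ and $t^6$ terms cancel, which is the only computational content.

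There is no serious obstacle: the two points requiring care are (i) pinning down the degree-$\le 1$ correction, which hinges on getting $D_{\R_0^l}$ and $D_{\R_1^l}$ (both equal to $1$) right, and (ii) avoiding sign errors in the final polynomial multiplication. An essentially equivalent alternative bypasses the recurrence entirely: by~\eqref{eq:theta} the coefficient of $t^{n+2}$ in $\sum_{s\in\M^l}\theta_s$ is $DCW_{\R_n^l}(d,z)$ while that of $t^{m+2}$ in $\sum_{s\in\M}\theta_s$ is $DCW_{\R_m}(d,z)$, so one obtains the generating function of the $DCW_{\R_n^l}$ directly from the manipulation in the proof of Theorem~\ref{conjjecturegen} and then specializes $d\mapsto q+x$, $z\mapsto q$ through $D_{\R_n^l}(x,q)=DCW_{\R_n^l}(q+x,q)$; in that route the two low-order discrepancies appear automatically as the constants subtracted in~\eqref{eq:theta}.
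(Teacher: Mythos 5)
Your proposal is correct and follows essentially the same route as the paper: sum the recurrence of Theorem~\ref{conjjecturegen} over $n\geq 2$, account for the base values $D_{\R_0^l}=D_{\R_1^l}=1$, and conclude $\sum_{n\geq 0}D_{\R_n^l}(x,q)t^n=1-t+(2t-t^2)\sum_{n\geq 0}D_{\R_n}(x,q)t^n$. Your explicit verification of the numerator identity $(2t-t^2)N(t)+(1-t)D(t)=1+(q+2x)t^2+2x(q+x)t^4$ is a welcome extra step that the paper leaves implicit, and it does check out.
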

\begin{proof}
For $n\geq2$ we deduce  from  Theorem~\ref{conjjecturegen} the equality 
\begin{equation*}
 t^nD_{\R_n^l}(x,q)= 2t^nD_{\R_{n-1}}(x,q)-t^nD_{R_{n-2}}(x,q)
\end{equation*}
thus by summation over $\{n\geq2\}$
\begin{equation*}
	\sum_{n\geq 0} D_{\R_n^l}(x,q) t^n -1-t=2t(\sum_{n\geq 0} D_{\R_n}(x,q)-1)-t^2(\sum_{n\geq 0} D_{\R_n}(x,q))
\end{equation*}
and therefore 
\begin{equation*}
\sum_{n\geq 0} D_{\R_n^l}(x,q) t^n= 1-t+  (2t-t^2)(\sum_{n\geq 0} D_{\R_n}(x,q))\, .
\end{equation*}

\end{proof}\qed
\section{Concluding remarks}
\label{sec:Conclusion}
The daisy cubes are an important family of subgraphs of hypercubes introduced in~\cite{KM-2019a}.  For any vertex of a daisy cube $G$  the down-degree is equal to the Hamming weight. The down-degree co-weight polynomial of a daisy cube $G$ therefore reduces to the weight enumerator polynomial.
\begin{equation*}
DCW_{G}(d,z)= \sum_{v\in V(G)}d^{\deg_{\rm down} (v)}z^{w(v)-{\deg_{\rm down} (v)}}=\sum_{v\in V(G)}d^{w (v)}=W_{G}(d) \, .
\end{equation*}

Fibonacci cubes, Lucas cubes, Alternate Lucas-cube~\cite{ESS-2021e} and  Pell graph~\cite{M-2019}\cite[Theorem 9.68 for a proof]{EKM-2023} are examples of daisy cubes. 

It is proved in~\cite[Corollary 2.6]{KM-2019a}, using the inclusion-exclusion principle, that for a daisy cube
\begin{equation*}
D_G(x,q)=C_G(x+q-1)=W_G(x+q)\,.
\end{equation*}
Thus daisy cubes,  Fibonacci-run graphs and Lucas-run graphs, satisfy the relation 
\begin{equation}\label{eqn:daisy}
D_G(x,q)=DCW_G(x+q,q)\,.
\end{equation}
A first open question is to determine the subgraphs of hypercubes such that equality~\ref{eqn:daisy} holds.

Note that for a daisy cube $D_G(x,-x)=C_G(-1)=1$ since $W_G(0)=1$. Another important class of graphs, related to hypercubes, is that of median graphs~\cite{M-1978}.  Soltan and 
Chepoi~\cite{SC-1987} and independently \v{S}krekovski~\cite{S-2001} proved that if $G$ is a median graph then $C_G(-1)=1$. This equality in particular generalizes the fact that $n(T) - m(T) = 1$ holds for any tree $T$. 
Fibonacci-run graphs and Lucas-run graphs are, in general, neither median nor daisy cubes but from equation~\ref{eqn:daisy}  satisfy also $D_G(x,-x)=1$ since $0^n$ is the only vertex with down-degree $0$. 

Setting $x=-1$  we obtain
\begin{equation*}
	 D_{\R_n}(-1,1)=C_{\R_n}(-1)=1   \, ,\,\,
D_{\R_n^l}(-1,1)=C_{\R_n^l}(-1)=1 \,.
\end{equation*}

A natural problem is thus to characterize the class of subgraphs $G$ of hypercubes for which $C_G(-1)=1$ holds.

The work done in Sections~\ref{sec:FR} and~\ref{sec:LR} on the monoid $\M$ of run-constrained strings can be done on the monoid $\Fib$ of extended Fibonacci strings. Although it does not produce new results, it is interesting to note that the generating function of $DCW_{\Gamma_n}(d,z)$, thus $\sum_{n\geq0}W_{\Gamma_n}(d)t^n$, can be determined as in Proposition~\ref{prop:RdownW}
by evaluating $\sum_{s}d^{w(s)}t^{l(s)}$ on the alphabet $F$.

The generating function of $(D_{\Gamma_n}(x,q))_{n\geq 0}$ is then determined like in Proposition~\ref{prop:gCubes}.

Lucas strings satisfy the equivalent of equation~\ref{eq:Ml}
\begin{equation*}
		\bigcup_{n\geq 0}{V(\Lambda_n)00}=(0\Fib \setminus \{0\})\cup ((\Fib0 \setminus \{0\}) \setminus 0\Fib0) \, .
\end{equation*}
and as a consequence
\begin{equation*}
D_{\Lambda_n}(x,q)= 2D_{\Gamma_{n-1}}(x,q)-D_{\Gamma_{n-2}}(x,q)
\end{equation*}
 holds for any $n\geq2$.

This latter equality has only been given explicitly, to my knowledge, in the particular case of the cube polynomial~\cite{KM-2012a} but could have been demonstrated independently from the generating functions given in~\cite{SE-2017a} and~\cite{SE-2018a}.
		%

\bibliographystyle{plain}  

\bibliography{fibo-bib}      

 \end{document}